\newtheorem{theorem}{Theorem}[]
\newtheorem{proposition}[theorem]{Proposition}
\newtheorem{claim}{Claim}
\newtheorem{conjecture}[theorem]{Conjecture}
\numberwithin{equation}{section}
\title{Remarks on proper conflict-free degree-choosability of graphs with prescribed degeneracy}
\author{
    Masaki Kashima\thanks{Faculty of Science and Technology, Keio University, Yokohama, Japan. Email: masaki.kashima10@gmail.com} \quad  
	Riste \v{S}krekovski\thanks{Faculty of Mathematics and Physics, University of Ljubljana, Faculty of Information Studies in Novo Mesto, and Rudolfovo - Science and Technology Centre Novo Mesto, Slovenia. Email:skrekovski@gmail.com} \quad 
	Rongxing Xu\thanks{School of Mathematical and Science, Zhejiang Normal University, Jinhua, China. Email:xurongxing@zjnu.edu.cn}}
\begin{document}

\maketitle

\begin{abstract}
    A proper coloring $\phi$ of $G$ is called a proper conflict-free coloring of $G$ if for every non-isolated vertex $v$ of $G$, there is a color $c$ such that $|\phi^{-1}(c)\cap N_G(v)|=1$.
    As an analogy of degree-choosability of graphs, we introduced the notion of proper conflict-free $({\rm degree}+k)$-choosability of graphs.
    For a non-negative integer $k$, a graph $G$ is proper conflict-free $({\rm degree}+k)$-choosable if for any list assignment $L$ of $G$ with $|L(v)|\geq d_G(v)+k$ for every vertex $v\in V(G)$, $G$ admits a proper conflict-free coloring $\phi$ such that $\phi(v)\in L(v)$ for every vertex $v\in V(G)$.
    In this note, we first remark if a graph $G$ is $d$-degenerate, then $G$ is proper conflict-free $({\rm degree}+d+1)$-choosable.
    Furthermore, when $d=1$, we can reduce the number of colors by showing that every tree is proper conflict-free $({\rm degree}+1)$-choosable.
    This motivates us to state a question.
\end{abstract}
\textbf{Key Words:} proper conflict-free coloring, list coloring, degree-choosability, degeneracy

\section{Introduction}\label{sec:intro}

Throughout the paper, we only consider simple, finite, and undirected graphs.
Let $\mathbb{N}$ be the set of positive integers.
For a positive integer $k$, let $[k]$ denote the set of integers $\{1,2,\dots , k\}$.

For a graph $G$, a mapping $\phi$ from $V(G)$ to $\mathbb{N}$ is called a \emph{proper coloring of $G$} if $\phi(u)\neq \phi(v)$ for every edge $uv\in E(G)$.
A proper coloring of a graph $G$ in which every vertex of $G$ maps to an integer in $[k]$ is called a proper $k$-coloring of $G$.

Recently, Fabrici, Lu\v{z}ar, Rindo\v{s}ov\'{a}, and Sot\'{a}k~\cite{FLRS2023} introduced a new variation of coloring named proper conflict-free coloring of graphs.
For a graph $G$, a mapping $\phi$ from $V(G)$ to $\mathbb{N}$ is called a \emph{proper conflict-free coloring of $G$} if $\phi$ is a proper coloring of $G$ and every non-isolated vertex $v\in V(G)$ has a color $c$ such that $|\phi^{-1}(c)\cap N_G(v)|=1$, where $N_G(v)$ is the (open) neighborhood of $v$.
A proper conflict-free coloring of a graph $G$ such that every vertex of $G$ maps to an integer in $[k]$ is called a \emph{proper conflict-free $k$-coloring of $G$}.
For a (partial) coloring $\phi$ of $G$ and a vertex $v\in V(G)$, let $\mathcal{U}_{\phi}(v,G)$ denote the set of colors that appear exactly once in the neighborhood of $v$.
Using this notation, a proper conflict-free coloring $\phi$ of $G$ is a proper coloring $\phi$ of $G$ such that $\mathcal{U}_{\phi}(v,G)\neq \emptyset$ for every non-isolated vertex $v\in V(G)$.
The \emph{proper conflict-free chromatic number} of a graph $G$, denoted by $\chi_{\text{pcf}}(G)$, is the least integer $k$ such that $G$ admits a proper conflict-free $k$-coloring.

One major problem in proper conflict-free coloring is the following Brooks-type conjecture, which was posed by Caro, Petru\v{s}evski, and \v{S}krekovski~\cite{CPS2023}.

\begin{conjecture}\label{conj:brooks}
    For every graph $G$ with the maximum degree $\Delta\geq 3$, $\chi_{{\rm pcf}}(G)\leq \Delta+1$.
\end{conjecture}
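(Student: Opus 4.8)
\medskip
\noindent\textbf{Proof proposal for Conjecture~\ref{conj:brooks}.}
To the best of my knowledge the conjecture is still open, so the following is an attack plan rather than a proof. I would split it along the two regimes standard for Brooks-type statements: bounded maximum degree, and large maximum degree. In both cases one may first reduce to $G$ connected (indeed $2$-connected, via a block decomposition) and, by Brooks' theorem, to $G$ that is neither a complete graph nor an odd cycle, so that $G$ has a proper $\Delta$-colouring and hence colour $\Delta+1$ to spare.

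\emph{Bounded maximum degree.} Fix a threshold $\Delta_0$, assume $3\le\Delta\le\Delta_0$, and take a vertex-minimal counterexample $G$. The task is to identify reducible configurations around low-degree vertices: delete such a vertex $v$, invoke minimality to get a proper conflict-free $(\Delta+1)$-colouring $\phi$ of $G-v$, and recolour $v$. The delicate point is that choosing a colour $c$ for $v$ may change the colour multiset on $N_G(u)$ for a neighbour $u$ and empty $\mathcal{U}_{\phi}(u,G)$, destroying $u$'s witness; so for each candidate $c$ one must balance the number of neighbours it spoils against the number of choices that simultaneously keep $v$ proper and give $v$ its own witness, and show these are compatible when $d_G(v)$ is small. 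Fed into a discharging argument over $V(G)$ (of the kind already known to work for planar and bounded-degeneracy graphs), this should settle all $\Delta\le\Delta_0$, with the smallest values of $\Delta$ finished by hand or by computer.

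\emph{Large maximum degree.} Here I would argue probabilistically: recolour a sparse random set of vertices of the Brooks colouring into the spare colour $\Delta+1$, and for vertices not thereby given a witness iterate with the semi-random (nibble) method, re-randomising only within a bounded radius per round. The crucial estimate is that for a vertex $v$ of degree $d$ the event ``no colour appears exactly once on $N_G(v)$'' has probability exponentially small in $d$ --- essentially a balls-in-bins estimate for the number of singleton bins, but the properness constraint correlates the colours on $N_G(v)$ and must be kept under control. Since the resulting dependency graph has degree polynomial in $\Delta$, the symmetric Lov\'asz Local Lemma is too weak, and one needs an entropy-compression/algorithmic refinement or an iterative exposure argument.

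\emph{Main obstacle.} The decisive difficulty is the gap between the $\Delta+O(\sqrt{\Delta})$ (or $\Delta+O(\log\Delta)$) colours that such a random scheme yields comfortably and the exact bound $\Delta+1$: with a single spare colour there is essentially no slack for local repairs, so a purely probabilistic argument cannot close the gap on its own and must be fused with a structural classification of the ``tight'' neighbourhoods where the colouring is forced --- exactly the kind of dichotomy on which Brooks' theorem itself rests. I expect the real content to be an absorption- or kernel-type step that first colours a carefully chosen sparse skeleton to manufacture local slack everywhere and only then completes the colouring, made uniform over all graphs rather than only the restricted classes where partial results are known. In particular, the degeneracy method behind the remarks of this note gives only $\chi_{\text{pcf}}(G)\le 2\Delta+1$, since every graph of maximum degree $\Delta$ is $\Delta$-degenerate, so a genuinely new ingredient is required.
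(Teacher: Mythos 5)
There is nothing in the paper to compare your attempt against: the statement is Conjecture~\ref{conj:brooks}, posed by Caro, Petru\v{s}evski, and \v{S}krekovski, and the paper explicitly records that it is open except for $\Delta=3$ (Liu and Yu) and some asymptotic results. The paper offers no proof, and neither do you --- you say as much yourself. So the verdict is simply that this is not a proof: every component of your plan (reducible configurations plus discharging for bounded $\Delta$; a nibble/semi-random recolouring for large $\Delta$; an absorption step to close the gap from $\Delta+O(\sqrt{\Delta})$ down to $\Delta+1$) is stated as something that ``should'' or ``would'' work, with no configuration actually exhibited, no probability estimate actually proved, and no mechanism given for the step you yourself identify as decisive, namely surviving with a single spare colour. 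The one concrete difficulty you name --- that colouring $v$ can destroy the unique-colour witness of a neighbour $u$, so local repairs interact --- is exactly the obstruction that makes this hard, and your proposal does not overcome it.

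Your surrounding factual claims are essentially accurate and consistent with the paper: the $\Delta=3$ case is the only fully resolved one, the known general bounds are asymptotic ($\Delta+O(\cdot)$ rather than $\Delta+1$), and Proposition~\ref{prop:degeneracy} applied with $d=\Delta$ indeed gives only $\chi_{\text{pcf}}(G)\leq 2\Delta+1$, so the degeneracy method of this note cannot by itself approach the conjectured bound. But an accurate survey of why the problem is hard is not a proof of the statement, and you should not present it as progress toward one without at least one of the lemmas in your outline carried out in full.
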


This conjecture is widely open except for the case $\Delta=3$ by Liu and Yu~\cite{LY2013} and some asymptotic results in the literature~\cite{CCKP2025,CLarxiv,KP2024,L2024,LRarxiv}. 

It is well known that the original Brooks' theorem was generalized to degree-choosability of graphs in Borodin~\cite{B1977} and Erd\H{o}s, Rubin, and Taylor~\cite{ERT1979}.
Following the same direction, we introduced the concept of proper conflict-free $({\rm degree}+k)$-choosability of graphs in \cite{KSXdegree}, as follows.

A list assignment $L$ of a graph $G$ maps each vertex of $G$ to a set of integers.
For a mapping $f$ from $V(G)$ to positive integers, an $f$-list assignment of $G$ is a list assignment $L$ of $G$ with $|L(v)|\geq f(v)$ for every vertex $v\in V(G)$.
In particular, if $f$ is the constant map from $V(G)$ to a positive integer $k$, an $f$-list assignment of $G$ is called a $k$-list assignment of $G$.

For a given graph $G$ and a list assignment $L$ of $G$, a \emph{proper conflict-free $L$-coloring} of $G$ is a proper conflict-free coloring $\phi$ of $G$ such that $\phi(v)\in L(v)$ for every vertex $v\in V(G)$.
For a non-negative integer $k$, a graph $G$ is \emph{proper conflict-free $({\rm degree}+k)$-choosable} if $G$ admits a proper conflict-free $L$-coloring for any $f$-list assignment of $G$, where $f(v)=d_G(v)+k$ for every vertex $v\in V(G)$.
It is natural to ask whether there is an absolute constant $k$  such that every graph is proper conflict-free $({\rm degree}+k)$-choosable, but in fact, even the existence of a constant $k$ such that $\chi_{\text{pcf}}(G)\leq \Delta(G)+k$ for every graph $G$ is unknown.

In this note, we focus on the proper conflict-free $({\rm degree}+k)$-choosability of graphs with a given degeneracy.
For a positive integer $d$, a graph $G$ is $d$-degenerate if every subgraph $H$ of $G$ satisfies $\delta(H)\leq d$.
We first remark the following simple bound, which states the relationship between the degeneracy and proper conflict-free $({\rm degree}+k)$-choosability of graphs.

\begin{proposition}\label{prop:degeneracy}
    If $G$ is a $d$-degenerate graph for some positive integer $d$,
    then $G$ is proper conflict-free $({\rm degree}+d+1)$-choosable.
\end{proposition}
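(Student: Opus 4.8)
The plan is to combine the standard greedy coloring along a degeneracy order with a small bookkeeping trick: for every non-isolated vertex we pre-assign, in advance, the neighbour that will serve as its conflict-free witness, and then we only need to make sure that this witness colour is never reused inside the relevant neighbourhood.

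First I would fix a degeneracy ordering $v_1,\dots,v_n$ of $V(G)$, so that each $v_i$ has at most $d$ neighbours among $\{v_1,\dots,v_{i-1}\}$; call these the \emph{back-neighbours} of $v_i$. For every non-isolated vertex $v$, let $s(v)$ denote the neighbour of $v$ of smallest index --- equivalently, the neighbour of $v$ that gets coloured first when we process vertices in the order $v_1,\dots,v_n$. The intention is that the final colouring $\phi$ will satisfy $|\phi^{-1}(\phi(s(v)))\cap N_G(v)|=1$ for every non-isolated $v$, i.e.\ $\phi(s(v))\in\mathcal{U}_\phi(v,G)$.

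Then I would colour $v_1,v_2,\dots,v_n$ greedily. When it is $v_i$'s turn, I forbid two kinds of colours: (a) the colours already used on the (at most $d$) back-neighbours of $v_i$, which keeps $\phi$ proper; and (b) for every neighbour $v$ of $v_i$ with $s(v)\neq v_i$, the colour $\phi(s(v))$ --- here $s(v)$ has strictly smaller index than $v_i$, hence is already coloured, so $\phi(s(v))$ is well defined. Set (a) forbids at most $d$ colours and set (b) at most $d_G(v_i)$ colours, so at most $d_G(v_i)+d$ colours are forbidden; since $|L(v_i)|\ge d_G(v_i)+d+1$, a permissible colour always remains. Isolated vertices receive any colour from their (nonempty) lists.

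Finally I would verify the two required properties. Properness is immediate from rule (a). For the conflict-free condition, fix a non-isolated vertex $v$ and put $c=\phi(s(v))$; I claim that $c$ occurs exactly once in $N_G(v)$. Every neighbour $v'\neq s(v)$ of $v$ has larger index than $s(v)$, because $s(v)$ is the minimum-index neighbour of $v$; hence when $v'$ was coloured, $s(v)$ had already been coloured, and rule (b), applied with $v$ playing the role of the neighbour, forbade $\phi(v')=\phi(s(v))=c$. Thus $c\in\mathcal{U}_\phi(v,G)$ and $\phi$ is a proper conflict-free $L$-colouring. The only point that needs care is the count in step (b): one must use the \emph{degeneracy} of $G$, not just its maximum degree, to bound the back-neighbours in rule (a) by $d$, so that the total number of forbidden colours is exactly $d_G(v_i)+d$, leaving the single extra colour guaranteed by the list size. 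Everything else is a routine check that $s(v)$ precedes every other neighbour of $v$ in the ordering.
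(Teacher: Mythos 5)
Your proof is correct and follows essentially the same route as the paper: a greedy coloring along a degeneracy ordering in which each vertex avoids the colors of its at most $d$ back-neighbours and the colors of the earliest-coloured neighbours of its neighbours, so that the first-coloured neighbour of each vertex serves as its conflict-free witness. No issues to report.
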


\begin{proof}
    Let $G$ be a $d$-degenerate graph of order $n$. Let $L$ be a list assignment of $G$ satisfying $|L(v)|\geq d_G(v)+d+1$ for each vertex $v\in V(G)$. 
    Since $G$ is $d$-degenerate, we label the vertices of $G$ as $v_1, v_2, \dots, v_n$ so that each vertex has at most $d$ neighbors with smaller indices. 

    We color the vertices greedily in the order $v_1, v_2, \dots, v_n$ as follows: For each $i \in \{1, 2, \dots, n\}$, we assign $v_i$ a color from $L(v_i)$ that differs from the colors of all previously indexed vertices that are either adjacent to $v_i$ or are the earliest (i.e., the smallest-indexed) neighbor of a vertex adjacent to $v_i$. Note that at most $d$ colors are forbidden by the previously colored neighbors and at most $d_G(v)$ colors are forbidden by the earliest neighbors of $N_G(v_i)$, and hence at least one color is available for $v_i$.
    It is obvious that the obtained coloring is a proper coloring of $G$. 
    Furthermore, for each vertex of $G$, the color of the least indexed neighbor appears exactly once in the neighbors, and hence we obtain a proper conflict-free $L$ coloring of $G$.
\end{proof}

This improves a bound from Cranston and Liu~\cite{CLarxiv} of a Brooks-type statement.
One may ask whether the bound is best possible.
As we saw previously, the 5-cycle is 2-degenerate and not proper conflict-free $({\rm degree}+2)$-choosable, and hence we cannot improve the bound in Proposition~\ref{prop:degeneracy} in general.
On the other hand, when $d=1$, we show the following result, that states that the bound can be reduced to $({\rm degree}+d)$.

\begin{theorem}\label{thm:tree}
    Every tree is proper conflict-free $({\rm degree}+1)$-choosable.
\end{theorem}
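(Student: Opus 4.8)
The plan is to prove the statement by strong induction on $|V(T)|$. The cases where $T$ has at most two vertices, where $T$ is a star $K_{1,n}$, and where $T$ is a path are treated separately by direct (and short) arguments: for a star the point is only that $|L(\text{centre})|\ge n+1\ge 3$ leaves enough room to colour the centre so that at least one of its leaves can receive a colour used by no other leaf, and for a path one colours from one end, using that the lists of size $\ge d(v)+1$ absorb the two constraints (properness, and making the previous vertex conflict-free) at each internal vertex, with the single tight spot at the last vertex handled by the freedom left at the penultimate ones. So from now on assume $T$ is neither a star nor a path; then $T$ has at least four vertices and a branch vertex (a vertex of degree $\ge 3$).

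Taking a longest path of $T$ and walking inward from one of its endpoints, we find a pendant path $Q\colon v_0v_1\cdots v_{j-1}$ of $T$ — so $v_0$ is a leaf, $v_1,\dots,v_{j-1}$ have degree $2$, and possibly $j=1$ — attached to a branch vertex $p$ by the edge $v_{j-1}p$. Put $T'=T-V(Q)$. Then $d_{T'}(w)=d_T(w)$ for all $w\neq p$ while $d_{T'}(p)=d_T(p)-1\ge 2$, so $L$ restricted to $T'$ still satisfies $|L(w)|\ge d_{T'}(w)+1$ everywhere and in addition $|L(p)|\ge d_{T'}(p)+2$. The idea is to colour $T'$ by the induction hypothesis and then colour $Q$ from $p$ outward: $v_{j-1}$ must avoid $\phi(p)$; each $v_i$ with $1\le i\le j-2$ must avoid its already-coloured neighbour $v_{i+1}$ and, to make $v_{i+1}$ conflict-free, also $v_{i+2}$ (or $p$); and finally $v_0$ must avoid $v_1$ and, to make $v_1$ conflict-free, also $v_2$ (or $p$). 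Since the vertices of $Q$ other than $v_0$ have lists of size $\ge 3$, all these steps are harmless except possibly the colouring of the leaf $v_0$, whose list may have size only $2$ while it is asked to avoid two prescribed colours.

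Two things must therefore be arranged: adding the neighbour $v_{j-1}$ to $p$ must not destroy $p$'s conflict-freeness, and when we reach $v_0$ the two forbidden colours must not exhaust $L(v_0)$. Both are handled by carrying through the induction a strengthened hypothesis, to the effect that whenever the tree has a vertex $r$ with $d(r)\ge 2$ and $|L(r)|\ge d(r)+2$ one can find a proper conflict-free $L$-colouring in which \emph{two} distinct colours appear exactly once on $N(r)$ (and more generally, that enough such ``conflict-free slack'' can be secured simultaneously at all the oversized vertices the reduction produces). Applied to $(T',p)$, this gives a colouring $\phi'$ of $T'$ with $|\mathcal U_{\phi'}(p,T')|\ge 2$, so that $p$ survives the addition of $v_{j-1}$ regardless of the colour $v_{j-1}$ gets; and the remaining freedom, together with the surplus colour in $L(v_1)$ (size $\ge 3$ against at most two fixed colours), lets us steer $\phi(v_1)$ away from the one value of $L(v_0)$ (besides $\phi(v_2)$) that would trap $v_0$. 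When $j=1$ the leaf $v_0$ is itself the vertex attached to $p$, and the slack at $p$ is exactly what is needed; this is the case that forces the strengthening and is the crux of the argument.

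The main obstacle I expect is precisely the clean formulation and self-contained verification of the strengthened hypothesis: one must check that the small cases (stars, short paths, trees with no branch vertex) admit colourings with this extra conflict-free slack — for a degree-$2$ vertex it is automatic, since a conflict-free colouring already forces its two neighbours to differ, but for a vertex of degree $\ge 3$ it is a genuine extra demand — and, more delicately, one must track what happens when the removed pendant path is attached at the very vertex that carried the surplus in $T$, showing the surplus can be re-established in $T'$. Essentially all of the case analysis lives here; the outward-colouring of $Q$ sketched above is then routine.
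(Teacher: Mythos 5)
There is a genuine gap, and it sits exactly where you predicted: the strengthened induction hypothesis you propose to carry through the induction is false. Take $T'=K_{1,3}$ with centre $r$ and leaves $u_1,u_2,u_3$, and let $L(u_1)=L(u_2)=L(u_3)=\{1,2\}$ and $|L(r)|=5=d_{T'}(r)+2$. Every proper colouring assigns each $u_i$ a colour from $\{1,2\}$, so the multiset of colours on $N_{T'}(r)$ is one of $\{1,1,1\},\{1,1,2\},\{1,2,2\},\{2,2,2\}$, and in no case do \emph{two} distinct colours appear exactly once. This configuration arises unavoidably in your reduction: if $T$ is the spider with centre $p$ of degree $4$, three pendant edges and one pendant path $Q$, then $T'=T-V(Q)$ is exactly this star, the leaves legitimately have lists of size $d_{T'}(u_i)+1=2$, and $p$ is the vertex at which you need the slack. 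More generally, whenever several degree-one neighbours of $r$ share the same $2$-element list, pigeonhole caps $|\mathcal{U}(r)|$ at one, so no reformulation of the lemma that asks for two unique colours at a vertex of degree $\ge 3$ can survive. Since the case $j=1$ (a single leaf hung on the branch vertex) is, as you say, precisely the case that forces the strengthening, the induction does not close. The secondary difficulty you flag at the other end of $Q$ — the leaf $v_0$ with $|L(v_0)|=2$ facing two forbidden colours — is also not resolved by "steering $\phi(v_1)$": that steering needs information about $L(v_0)$ to be propagated backwards before $T'$ is coloured, which your scheme does not do.

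For contrast, the paper avoids any strengthened hypothesis. It takes a minimal counterexample and first proves two structural claims about pendant paths: for a pendant $P_3$ the leaf's list is contained in its neighbour's list (proved by reserving a colour $\alpha\in L(v_1)\setminus L(v_2)$ and deleting it from $L(v_3)$ before colouring the smaller tree), and pendant paths of length $3$ with the stated degrees cannot occur at all. This forces a branch vertex $v_0$ all but one of whose subtrees are $K_1$ or $K_2$; the paper then deletes $v_0$ \emph{together with all these short subtrees} (rather than a single pendant path) and, crucially, removes a carefully chosen colour $\gamma$ from the list of the remaining neighbour $x_0$ before invoking minimality, so that $\gamma$ can later be planted on exactly one neighbour of $v_0$ to certify its conflict-freeness. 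The ensuing five-case analysis is where the real work lives; your assessment that the extension step is "routine" once the slack lemma is in place underestimates this, but the decisive problem is that the slack lemma itself is unavailable.
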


The proof of the above theorem is given in the next section.
The bound $({\rm degree}+1)$ cannot be reduced to $({\rm degree}+0)$.
Indeed, let us consider a star $K_{1,n-1}$ ($n\geq 3$) with the center $v_0$ and leaves $v_1,v_2,\dots ,v_{n-1}$, and let $L$ be a list assignment of $K_{1,n-1}$ such that $|L(v_0)|=n-1$ and $L(v_1)=L(v_2)=\cdots =L(v_{n-1})=\{1\}$.
Then the center $v_0$ must have $n-1$ neighbors with color 1 no matter what list $v_0$ has, and hence $K_{1,n-1}$ is not proper conflict-free $L$-colorable.

Similarly, we can construct a 2-degenerate graph that is not proper conflict-free $({\rm degree}+1)$-choosable.
Let $G$ be a graph obtained by $n$ copies $C_1,C_2,\dots ,C_n$ of the 4-cycle ($n\geq 1$) by identifying the vertices $v_1,v_2,\dots ,v_n$ into a vertex $v$, where $v_i$ is a vertex of $C_i$.
Obviously $G$ is 2-degenerate.
Let $L$ be a list assignment of $G$ such that $L(v)=\{1,2,\dots,2n+1\}$ and each vertex of $V(C_i)\setminus \{v_i\}$ has a list $\{1,2i,2i+1\}$.
Then, by the conflict-free condition of vertices of the $i$th copy, colors $\{1,2i,2i+1\}$ are forbidden for $v$, and hence there is no color left for $v$.
Hence, $G$ is not proper conflict-free $L$-colorable.

Considering Theorem~\ref{thm:tree} and these constructions, we ask whether the following may hold.

\begin{conjecture}\label{conj:degeneracy}
    If $G$ is a $d$-degenerate graph for some positive integer $d$,
    then $G$ is proper conflict-free $({\rm degree}+d)$-choosable.
\end{conjecture}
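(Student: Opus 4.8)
The plan is to strengthen the greedy argument behind Proposition~\ref{prop:degeneracy} so as to shave off the single extra colour, using the degeneracy ordering as the backbone and the tree case (Theorem~\ref{thm:tree}) as a guide for the extremal behaviour. Fix a degeneracy ordering $v_1,v_2,\dots,v_n$ in which each $v_i$ has at most $d$ \emph{back-neighbours} (neighbours of smaller index), and for a vertex $w$ let $e(w)$ denote its smallest-indexed neighbour. The proof of Proposition~\ref{prop:degeneracy} colours greedily while, for every neighbour $w$ of $v_i$, protecting the colour $\phi(e(w))$ so that $e(w)$ remains the conflict-free witness of $w$; this forbids at most $d_G(v_i)$ witness-colours together with at most $d$ properness-colours, for a total of $d_G(v_i)+d$. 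The entire difficulty is that this count is sometimes \emph{tight}: there are configurations in which all $d_G(v_i)+d$ forbidden colours are distinct, so the naive rule cannot, on its own, reduce the budget to $d_G(v_i)+d-1$.

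First I would try to extract a guaranteed overlap in the forbidden set. The natural candidate is the earliest neighbour $u^{\ast}=e(v_i)$: its colour is forbidden for properness, and one hopes it coincides with the witness-colour $\phi(e(w))$ of some other neighbour $w$ of $v_i$. Whenever some $w\in N_G(v_i)$ satisfies $e(w)=u^{\ast}$, or whenever $v_i$ has a forward-neighbour $w$ with $e(w)=v_i$ (so that no witness-colour is charged for $w$), one colour is saved automatically. The remaining, genuinely hard case is a vertex all of whose neighbours have pairwise distinct, already-coloured witnesses, none of which equals a properness-colour. To handle it I would abandon the rigid rule ``the witness of $w$ is $e(w)$'' and instead choose witnesses \emph{adaptively}: process the vertices in degeneracy order but, at each step, be allowed to re-assign the conflict-free witness of an already-coloured neighbour to a different singly-occurring colour in its neighbourhood, thereby freeing the colour we were protecting. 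The goal is an invariant asserting that every coloured vertex possesses a colour in $\mathcal{U}_{\phi}(w,G)$ with at least one unit of slack, so that inserting $v_i$ never simultaneously exhausts the list and destroys a neighbour's unique witness.

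The cleanest packaging is probably an induction on $|V(G)|$: delete a vertex $v$ with $d_G(v)\le d$, colour $G-v$ (where every neighbour of $v$ has strictly smaller degree, hence a surplus list), and re-insert $v$. The main obstacle --- and the reason the statement is only conjectured --- is exactly this re-insertion under the tight $(\mathrm{degree}+d)$ budget: the colour of $v$ must (i) differ from the colours of its $\le d$ neighbours, (ii) leave $v$ a singleton witness, and (iii) avoid turning any neighbour's unique witness colour into a repeated one. With no extra colour to spend, controlling (iii) for all $\le d$ neighbours at once is precisely where trees succeed, because a leaf contributes a private witness and each internal vertex has a single parent, giving the slack exploited in Theorem~\ref{thm:tree}. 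For general $d$ this slack is not free, so a successful proof will likely require a substantially strengthened inductive hypothesis --- for instance, guaranteeing each vertex a \emph{second} available witness colour, or permitting a bounded cascade of local recolourings around $v$ --- and verifying that such an invariant survives the deletion step is, I expect, the crux of the whole argument.
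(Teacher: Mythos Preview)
The statement you are attempting is \emph{Conjecture}~\ref{conj:degeneracy}; the paper does not prove it. The authors establish only the $d=1$ case (Theorem~\ref{thm:tree}) and the weaker $(\mathrm{degree}+d+1)$ bound (Proposition~\ref{prop:degeneracy}), mention that the $d=2$ outerplanar case and the $d=5$ planar case will appear elsewhere, and leave the general statement open. So there is no ``paper's own proof'' to compare against.

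Your write-up is not a proof either, and you essentially say so yourself: the final paragraph identifies the re-insertion step under the tight $(\mathrm{degree}+d)$ budget as ``the main obstacle --- and the reason the statement is only conjectured''. Everything before that is heuristic: the hoped-for overlap between properness-colours and witness-colours, the adaptive reassignment of witnesses, and the putative invariant about a second available witness are all stated as aspirations, not verified. In particular, the critical case you isolate --- a vertex $v_i$ whose neighbours have pairwise distinct already-coloured witnesses, none coinciding with a properness colour --- is exactly where the greedy/inductive scheme breaks, and you do not supply a mechanism that provably resolves it. The cascade-of-recolourings idea is plausible for $d=1$ (and indeed the paper's proof of Theorem~\ref{thm:tree} is a fairly intricate case analysis of local recolouring around a high-degree vertex whose pendant subtrees are all $K_1$ or $K_2$), but for $d\ge 2$ you give no bound on the cascade and no argument that the strengthened invariant is maintainable. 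As written, this is a reasonable research outline, not a proof.
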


The above constructions imply that the bound in Conjecture~\ref{conj:degeneracy} is best possible for $d=1,2$.
Also, by Theorem~\ref{thm:tree}, Conjecture~\ref{conj:degeneracy} holds for the case $d=1$.
For other values of $d$, we know that every connected outerplanar graph other than the 5-cycle is 2-degenerate and proper conflict-free $({\rm degree}+2)$-choosable~\cite{KSX2025}, and that every planar graph is 5-degenerate and proper conflict-free $({\rm degree}+5)$-choosable~\cite{KSX2025+}, which will appear in separate papers.

\section{Proof of Theorem~\ref{thm:tree}}\label{section:tree}

Suppose that the statement is false, and let $T$ be a minimum counterexample.
Obviously, $|V(T)|\geq 3$.
Let $L$ be a list assignment of $T$ such that $|L(v)|=d_T(v)+1$ for every vertex $v\in V(T)$ and $T$ is not proper conflict-free $L$-colorable.

\begin{claim}\label{claim:2path}
    Let $v_1v_2v_3$ be a path of length 2 of $T$ with $d_T(v_1)=1$ and $d_T(v_2)=2$.
    Then $L(v_1)\subseteq L(v_2)$.
\end{claim}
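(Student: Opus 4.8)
\emph{Sketch of the intended argument.} The plan is to argue by contradiction through the minimality of $T$. Assume $L(v_1)\not\subseteq L(v_2)$. The natural move is to delete the leaf $v_1$, obtaining the strictly smaller tree $T':=T-v_1$, color $T'$ first by invoking minimality, and then extend the coloring back to $v_1$. Since $v_1$ has $v_2$ as its unique neighbor, the conflict-free requirement at $v_1$ is automatic once the coloring is proper; and since $v_1$ is adjacent to no other vertex, the conflict-free requirements at $v_3$ and at every vertex other than $v_1,v_2$ are inherited verbatim from $T'$. Thus the only constraints on $\phi(v_1)$ are $\phi(v_1)\neq\phi(v_2)$ (properness) and $\phi(v_1)\neq\phi(v_3)$ (so that the two neighbors of $v_2$ receive distinct colors, giving $\mathcal{U}_\phi(v_2,T)\neq\emptyset$).

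The catch is that $|L(v_1)|=d_T(v_1)+1=2$, so forbidding both $\phi(v_2)$ and $\phi(v_3)$ could exhaust $L(v_1)$. To pre-empt this I would not keep $v_2$'s original list on $T'$ — in $T'$ the vertex $v_2$ has degree $1$ and so only needs a list of size $2$ — but instead shrink it. The point is a counting one: from $|L(v_2)|=d_T(v_2)+1=3$ and $L(v_1)\not\subseteq L(v_2)$ we get $|L(v_1)\cap L(v_2)|\le 1$, hence $|L(v_2)\setminus L(v_1)|\ge 2$. So I would fix any two-element set $L'(v_2)\subseteq L(v_2)\setminus L(v_1)$, set $L'(v)=L(v)$ for every other vertex, and observe that $L'$ is a list assignment of $T'$ with $|L'(v)|\ge d_{T'}(v)+1$ for all $v$ (all degrees other than that of $v_1$ are unchanged, since $v_2$ is the unique neighbor of $v_1$).

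By minimality, $T'$ admits a proper conflict-free $L'$-coloring $\phi'$. Now $\phi'(v_2)\in L'(v_2)$ is disjoint from $L(v_1)$, so at most one of $\phi'(v_2),\phi'(v_3)$ — namely possibly $\phi'(v_3)$ — lies in $L(v_1)$, whence $L(v_1)\setminus\{\phi'(v_2),\phi'(v_3)\}\neq\emptyset$. Coloring $v_1$ from this set and keeping $\phi'$ elsewhere gives a proper conflict-free $L$-coloring of $T$, contradicting the choice of $T$. I do not expect a genuinely hard step here: the only thing needing care is the routine check that no conflict-free condition is violated when $v_1$ is re-attached, which breaks into the three cases indicated above; the claim really turns on the inequality $|L(v_2)\setminus L(v_1)|\ge 2$, which lets us sidestep the bad configuration rather than having to recolor around it.
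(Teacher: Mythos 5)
Your argument is correct, and it reaches the same contradiction by a genuinely different reduction. The paper deletes \emph{both} $v_1$ and $v_2$, passes to $T'=T-\{v_1,v_2\}$ with $L'(v_3)=L(v_3)\setminus\{\alpha\}$ for a fixed $\alpha\in L(v_1)\setminus L(v_2)$ (the list shrinks by one exactly as $d(v_3)$ drops by one), colors $v_1$ with $\alpha$, and then must choose $\phi(v_2)\in L(v_2)\setminus\{\phi(v_3),\gamma\}$ where $\gamma\in\mathcal{U}_\phi(v_3,T')$ --- the extra forbidden color $\gamma$ is needed precisely because removing $v_2$ disturbs the conflict-free certificate at $v_3$, which must be protected when $v_2$ is recolored. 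You instead delete only the leaf $v_1$ and prune $L(v_2)$ down to two colors of $L(v_2)\setminus L(v_1)$, which exist by the counting step $|L(v_1)\cap L(v_2)|\le 1$; since $v_3$'s neighborhood and list are untouched, its conflict-free condition transfers verbatim and you never need to track $\mathcal{U}_\phi(v_3,T')$ at all. The only constraints on $\phi(v_1)$ are then $\phi'(v_2)$ (excluded automatically, as $L'(v_2)\cap L(v_1)=\emptyset$) and $\phi'(v_3)$, leaving at least one color in $L(v_1)$. Your route is slightly cleaner in that it minimizes the damage to the colored part; the paper's route is the one that generalizes to Claim~2 (the length-3 path), where two internal degree-2 vertices must be removed at once, so the paper's bookkeeping with $\gamma$ reappears there in any case. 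Both proofs are valid.
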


\begin{proof}
    Assume to the contrary that $L(v_1)\setminus L(v_2)\neq \emptyset$.
    Take a color $\alpha\in L(v_1)\setminus L(v_2)$.
    Let $T'=T-\{v_1,v_2\}$ and let $L'$ be a list assignment of $T'$ defined by $L'(v_3)=L(v_3)\setminus \{\alpha\}$ and $L'(v)=L(v)$ for every $v\in V(T')\setminus\{v_3\}$.
    Note that $|L'(v)|\geq d_{T'}(v)+1$ for every $v\in V(T')$.
    By the minimality of $T$, $T'$ admits a proper conflict-free $L'$-coloring $\phi$.
    Let $\phi(v_3)=\beta\neq \alpha$ and let $\gamma$ be a color in $\mathcal{U}_{\phi}(v_3,T')$.
    By setting $\phi(v_1)=\alpha$ and choosing $\phi(v_2)\in L(v_2)\setminus\{\beta,\gamma\}$, $\phi$ can be extended to a proper conflict-free $L$-coloring of $T$, a contradiction.
\end{proof}

\begin{claim}\label{claim:3path}
    $T$ does not have a path $v_1v_2v_3v_4$ of length 3 with $d_T(v_1)=1$ and $d_T(v_2)=d_T(v_3)=2$.
\end{claim}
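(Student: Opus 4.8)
The plan is to derive a contradiction: assuming such a path $v_1v_2v_3v_4$ exists, I would delete $\{v_1,v_2,v_3\}$ and apply the minimality of $T$ to the resulting smaller tree. By Claim~\ref{claim:2path} applied to $v_1v_2v_3$ we have $L(v_1)\subseteq L(v_2)$, and since $|L(v_2)|=d_T(v_2)+1=3$ while $|L(v_1)|=2$, there is a unique color $r\in L(v_2)\setminus L(v_1)$. Put $T^\circ:=T-\{v_1,v_2,v_3\}$; deleting $v_1$ (a leaf), then $v_2$, then $v_3$ successively removes leaves, so $T^\circ$ is a tree on fewer vertices than $T$. Let $L^\circ(v_4):=L(v_4)\setminus\{r\}$ and $L^\circ(v):=L(v)$ otherwise; then $|L^\circ(v)|\ge d_{T^\circ}(v)+1$ for all $v$ (the only tight vertex being $v_4$, which lost exactly one neighbor and at most one color). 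By minimality $T^\circ$ has a proper conflict-free $L^\circ$-coloring $\phi$, and $c_4:=\phi(v_4)\ne r$.

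Next I would extend $\phi$ to $v_3,v_2,v_1$. The requirements are: properness means $\phi(v_3)\ne c_4$ and $\phi(v_2)\notin\{\phi(v_1),\phi(v_3)\}$; vertex $v_1$ is automatically conflict-free; $v_2$ is conflict-free iff $\phi(v_1)\ne\phi(v_3)$; $v_3$ is conflict-free iff $\phi(v_2)\ne c_4$; and recoloring brings $v_3$ into $N_T(v_4)$, so $v_4$ stays conflict-free unless $\mathcal{U}_\phi(v_4,T^\circ)=\{\phi(v_3)\}$, that is, it suffices that $\phi(v_3)\notin\mathcal{U}_\phi(v_4,T^\circ)$ whenever $|\mathcal{U}_\phi(v_4,T^\circ)|=1$. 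The main idea is to put $r$ on $v_2$: because $r\notin L(v_1)$ and $r\ne c_4$, setting $\phi(v_2)=r$ automatically clears the constraints coming from $v_1$ and from the conflict-freeness of $v_3$, after which $\phi(v_1)$ can be chosen in the nonempty set $L(v_1)\setminus\{\phi(v_3)\}$. This works as soon as some $\phi(v_3)\in L(v_3)\setminus\{r,c_4\}$ keeps $v_4$ conflict-free, and a short count using $|L(v_3)|=3$ shows the only obstruction is $L(v_3)=\{c_4,\gamma,r\}$ with $|\mathcal{U}_\phi(v_4,T^\circ)|=1$ and $\gamma\ne r$, where $\gamma$ is the unique element of $\mathcal{U}_\phi(v_4,T^\circ)$. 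In that remaining configuration I would instead put $r$ on $v_3$ (legal, since $r\notin\{c_4,\gamma\}$, so $v_4$ stays conflict-free); then $L(v_2)=L(v_1)\cup\{r\}$ forces $\{\phi(v_1),\phi(v_2)\}=L(v_1)$, and taking $\phi(v_2)$ to be whichever color of $L(v_1)$ differs from $c_4$ (either one if $c_4\notin L(v_1)$) and $\phi(v_1)$ the other completes the coloring. Either way $T$ gets a proper conflict-free $L$-coloring, contradicting the choice of $T$.

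I expect the extension to be the only genuinely delicate part: the lists on $v_1,v_2,v_3$ have sizes $2,3,3$ and the proper-plus-conflict-free conditions are individually tight, so a blind greedy extension fails. What makes it go through is choosing the color deleted from $L(v_4)$ to be exactly $r=L(v_2)\setminus L(v_1)$ (which is where Claim~\ref{claim:2path} enters), together with the dichotomy ``$r$ on $v_2$'' versus ``$r$ on $v_3$'', which disposes of the single troublesome list configuration. The case $d_T(v_4)=1$, that is $T\cong P_4$, needs no separate argument: then $v_4$ is isolated in $T^\circ$, $\mathcal{U}_\phi(v_4,T^\circ)=\emptyset$, and the first option always applies.
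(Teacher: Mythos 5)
Your proposal is correct and follows essentially the same route as the paper: delete $\{v_1,v_2,v_3\}$, remove the color $r\in L(v_2)\setminus L(v_1)$ (guaranteed by Claim~\ref{claim:2path}) from $L(v_4)$, invoke minimality, and then extend by placing $r$ on one of $v_2,v_3$ so that $L(v_1)$ retains a usable color. The paper organizes the final dichotomy by whether $\phi(v_3)=r$ rather than by your ``obstruction configuration,'' but the underlying argument is the same.
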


\begin{proof}
    Assume to the contrary that $T$ has such a path $v_1v_2v_3v_4$.
    By Claim~\ref{claim:2path}, we know that $L(v_1)\subseteq L(v_2)$.
    As $|L(v_1)|=2$ and $|L(v_2)|=3$, let $\alpha$ be a color in $L(v_2)\setminus L(v_1)$.
    Let $T'=T-\{v_1,v_2,v_3\}$ and let $L'$ be a list assignment of $T'$ defined by $L'(v_4)=L(v_4)\setminus\{\alpha\}$ and $L'(v)=L(v)$ for every $v\in V(T')\setminus\{v_4\}$.
    By the minimality of $T$, $T'$ admits a proper conflict-free $L'$-coloring $\phi$.
    Let $\phi(v_4)=\beta\neq \alpha$ and let $\gamma$ be a color in $\mathcal{U}_{\phi}(v_4,T')$. Note that it is possible that $\gamma=\alpha$.
    We choose $\phi(v_3)\in L(v_3)\setminus\{\beta,\gamma\}$.
    We let $\phi(v_2)=\alpha$ if $\phi(v_3)\neq\alpha$, and let $\phi(v_2)$ be a color in $L(v_2)\setminus\{\phi(v_3),\beta\}$ otherwise.
    In either case, one of $v_2$ and $v_3$ is colored with $\alpha$, which is not in $L(v_1)$.
    Since $|L(v_1)\setminus \{\phi(v_2),\phi(v_3)\}|\geq 1$, we can choose $\phi(v_1)\in L(v_1)\setminus\{\phi(v_2),\phi(v_3)\}$, and hence $\phi$ can be extended to a proper conflict-free $L$-coloring of $T$, a contradiction.
\end{proof}

By Claim~\ref{claim:3path}, $T$ has a vertex $v_0$ of degree at least $3$ such that each component of $T-v_0$ except one component is isomorphic to $K_1$ or $K_2$.
Let $N_T(v_0)=\{x_0, x_1, \dots , x_k\}$.
Note that $k=d_T(v_0)-1\geq 2$.
For each $i\in \{0,1,\dots , k\}$, let $T_i$ denote the component of $T-v_0$ that contains $x_i$.
Without loss of generality, we may assume that $T_i$ is isomorphic to $K_2$ for every $i\in \{1,2,\dots ,\ell\}$ and $T_i$ is isomorphic to $K_1$ for every $i\in \{\ell+1, \ell+2,\dots , k\}$, where $\ell$ is a positive integer at most $k$.
For each $i\in \{1,2,\dots , \ell\}$, let $V(T_i)=\{x_i,y_i\}$ (Figure~\ref{fig:pcf tree}).

By Claim~\ref{claim:2path}, we have $L(y_i)\subseteq L(x_i)$ for each $i\in\{1,2,\dots , \ell\}$.
Thus, we let $L(x_i)=\{\alpha_i,\beta_i,\gamma_i\}$ and $L(y_i)=\{\beta_i,\gamma_i\}$ for each $i\in \{1,2,\dots , \ell\}$, and let $L(x_i)=\{\alpha_i,\beta_i\}$ for each $i\in \{\ell+1,\ell+2,\dots ,k\}$.

In the rest of the proof, we take a proper conflict-free coloring of $T':=T-\left(\bigcup_{i=1}^kV(T_i)\cup \{v_0\}\right)$ and extend it to $T$.

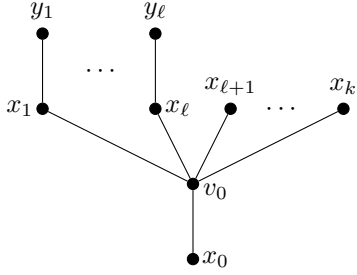
\begin{figure}
    \centering
    \begin{tikzpicture}[roundnode/.style={circle, draw=black,fill=black, minimum size=1.5mm, inner sep=0pt}]
    \node [roundnode] (v0) at (0,0){};
    \node [roundnode] (x0) at (0,-1){};
    \node [roundnode] (x1) at (-2,1){};
    \node [roundnode] (x2) at (-0.5,1){};
    \node [roundnode] (x3) at (0.5,1){};
    \node [roundnode] (x4) at (2,1){};
    \node [roundnode] (y1) at (-2,2){};
    \node [roundnode] (y2) at (-0.5,2){};

    \foreach \i in {0,1,2,3,4} \draw (v0)--(x\i);
    \foreach \i in {1,2} \draw (y\i)--(x\i);

    \node at (0.3,-0.1){$v_0$};
    \node at (0.3,-1){$x_0$};
    \node at (-2.3,1){$x_1$};
    \node at (-0.2,1){$x_\ell$};
    \node at (0.5,1.3){$x_{\ell+1}$};
    \node at (2,1.3){$x_k$};
    \node at (-2,2.3){$y_1$};
    \node at (-0.5,2.3){$y_\ell$};
    \node at (-1.2,1.5){$\cdots$};
    \node at (1.2,1){$\cdots$};
    \end{tikzpicture}
    \caption{A reducible structure of $T$.}
    \label{fig:pcf tree}
\end{figure}

We first consider relatively simple three cases.
In the following three cases, let $\phi$ be a proper conflict-free $L$-coloring of $T'$.
Let $\alpha=\phi(x_0)$ and let $\beta$ be a color in $\mathcal{U}_{\phi}(x_0,T')$.

\medskip
\noindent
\textbf{Case 1.} $k=2$.
\medskip

\noindent
Note that $d_T(v_0)=3$ in this case.
If $\ell=0$, then we let $\phi(x_2)\in L(x_2)\setminus\{\alpha\}$, $\phi(v_0)\in L(v_0)\setminus\{\alpha,\beta,\phi(x_2)\}$, and $\phi(x_1)\in L(x_1)\setminus\{\phi(v_0)\}$.
If $\ell=1$, then we let $\phi(x_2)\in L(x_2)\setminus\{\alpha\}$, $\phi(v_0)\in L(v_0)\setminus\{\alpha,\beta,\phi(x_2)\}$, $\phi(y_1)\in L(y_1)\setminus\{\phi(v_0)\}$, and $\phi(x_1)\in L(x_1)\setminus\{\phi(v_0),\phi(y_1)\}$.
In either case, since $d_T(v_0)=3$ and at least two colors appear in the neighbors of $v_0$, we have $\mathcal{U}_{\phi}(v_0,T)\neq \emptyset$.
Thus, we obtain a proper conflict-free $L$-coloring of $T$, a contradiction.

Now assume that $\ell=2$.
We consider another list assignment $L'$ of $T'$.
If $\alpha_1=\alpha_2$, then let $L'$ be a list assignment of $T'$ defined by $L'(x_0)=L(x_0)\setminus\{\alpha_1\}$ and $L'(v)=L(v)$ for every $v\in V(T')\setminus\{x_0\}$.
Otherwise, let $L'=L$.
Note that $|L'(v)|\geq d_{T'}(v)+1$ for every $v\in V(T')$.
By the minimality of $T$, $T'$ admits a proper conflict-free $L'$-coloring $\phi'$.
Let $\phi'(x_0)=\alpha'$ and let $\beta'$ be a color in $\mathcal{U}_{\phi'}(x_0,T')$.
By the definition of $L'$, either $\alpha_1\neq \alpha'$ or $\alpha_2\neq \alpha'$ holds.
Without loss of generality, we may assume that $\alpha_1\neq \alpha'$.
Then let $\phi'(x_1)=\alpha_1$, $\phi'(v_0)\in L(v_0)\setminus\{\alpha',\beta',\alpha_1\}$, $\phi'(y_i)\in L(y_i)\setminus\{\phi'(v_0)\}$ for $i=1,2$, and $\phi'(x_2)\in L(x_2)\setminus\{\phi'(v_0),\phi'(y_2)\}$.
It is easy to verify that $\phi'$ is a proper conflict-free $L$-coloring of $T$, a contradiction.

\medskip
\noindent
\textbf{Case 2.} $k\geq 3$ and $\ell=k$.
\medskip

\noindent
For each color $c\in L(v_0)\setminus\{\alpha,\beta\}$, 
let $I_c=\{i\mid 1\leq i\leq \ell, c\in L(y_i)\}$.
Since  $|L(v_0)\setminus\{\alpha,\beta\}|\geq k$ and $\sum_{c\in L(v_0)\setminus\{\alpha,\beta\}}|I_c|\leq \sum_{i=1}^\ell |L(y_i)|=2\ell$, there is a color $\gamma\in L(v_0)\setminus\{\alpha,\beta\}$ such that $|I_\gamma|\leq \frac{2\ell}{k}=2$.
Set $\phi(v_0)=\gamma$.
For each $i\in I_\gamma$, let $\phi(y_i)\in L(y_i)\setminus\{\gamma\}$ and let $\phi(x_i)\in L(x_i)\setminus\{\gamma,\phi(y_i)\}$.
Since $|I_\gamma|\leq 2<\ell$, we may assume that $1\notin I_\gamma$.

Now we color remaining vertices.
Since there are at most three colored neighbors of $v_0$ including $x_0$, either (a) there is a color $\alpha'$ that appears exactly once in the colored neighbors of $v_0$, or (b) all neighbors of $v_0$ are colored by $\alpha$.
For each case, we color the neighbors of $v_i$ in the following manner:
\begin{itemize}
    \item If (a), then let $\phi(x_i)\in L(x_i)\setminus\{\gamma,\alpha'\}$ for each $i\in \{1,2,\dots , \ell\}\setminus I_\gamma$.
    \item If (b), then let $\phi(x_1)\in L(x_1)\setminus\{\gamma,\alpha\}$ and let $\phi(x_i)\in L(x_i)\setminus\{\gamma,\phi(x_1)\}$ for each $i\in \{2,3,\dots , \ell\}\setminus I_\gamma$.
\end{itemize}
Then, we have $\alpha'\in \mathcal{U}_{\phi}(v_0,G)$ if (a), and $\phi(x_1)\in \mathcal{U}_{\phi}(v_0,G)$ if (b).
Finally, for each $i\in \{1,2,\dots , \ell\}\setminus I_\gamma$, we choose $\phi(y_i)\in L(y_i)\setminus \{\phi(x_i)\}$.
Since $\gamma\notin L(y_i)$ for each $i\in \{1,2,\dots ,\ell\}\setminus I_{\gamma}$, we know that $\phi(y_i)\neq \gamma=\phi(v_0)$ and hence $\gamma\in \mathcal{U}_{\phi}(x_i,G)$.
Thus, $\phi$ is a proper conflict-free $L$-coloring of $T$, a contradiction.

\medskip
\noindent
\textbf{Case 3.} $k\geq 3$ and $\ell=k-1$.

\medskip
\noindent
We define the color $\gamma$ and the set $I_\gamma$ similarly to Case 2. Note that the colors in $L(x_k)$ are not considered when we define $\gamma$ and $I_\gamma$ in this case.
By the assumption of this case and the choice of the color $\gamma$, we know that $|I_\gamma|\leq \left\lfloor\frac{2\ell}{k}\right\rfloor=1$.
Note that $d_G(x_k)=1$ and $d_G(x_i)=2$ for each $i\leq k-1$.

Set $\phi(v_0)=\gamma$.
We let $\phi(y_i)\in L(y_i)\setminus\{\gamma\}$ and $\phi(x_i)\in L(x_i)\setminus\{\gamma,\phi(y_i)\}$ for $i\in I_\gamma$, and let $\phi(x_k)\in L(x_k)\setminus\{\gamma\}$.
Then, the number of colored neighbors of $v_0$ is equal to $|I_\gamma\cup\{x_0,x_k\}|=|I_\gamma|+2\leq 3$.
Therefore, by a similar argument as in Case 2, we can extend $\phi$ to a proper conflict-free $L$-coloring of $T$, a contradiction.

\medskip
By the above three cases, we may assume that $k\geq 3$ and $\ell\leq k-2$.
We set $X=\{x_1,x_2,\dots ,x_k\}$.
Let $L(X)=\bigcup_{x\in X}L(x)$ and let $\tilde{L}(v_0)=L(v_0)\setminus L(X)$. We consider two cases depending on whether $\bigl|\tilde{L}(v_0)\bigr|\geq 2$ or not.

\medskip
\noindent
\textbf{Case 4.} $\bigl|\tilde{L}(v_0)\bigr|\geq 2$.

\medskip
\noindent
By Claim~\ref{claim:2path}, $\tilde{L}(v_0)\cap L(y_i)=\tilde{L}(v_0)\cap L(x_i)=\emptyset$ for every $i\in \{1,2,\dots , \ell\}$.
We fix a color $\gamma\in \tilde{L}(v_0)$, and let $L'$ be a list assignment of $T'$ defined by $L'(x_0)=L(x_0)\setminus \{\gamma\}$ and $L'(v)=L(v)$ for every $v\in V(T')\setminus\{x_0\}$.
By the minimality of $T$, $T'$ admits a proper conflict-free $L'$-coloring $\phi$.
Let $\phi(x_0)=\alpha$ and let $\beta$ be a color in $\mathcal{U}_{\phi}(x_0,T')$. Note that it is possible that $\beta=\gamma$.

If $\tilde{L}(v_0)\setminus \{\alpha,\beta\}\neq \emptyset$, then let $\phi(v_0)\in \tilde{L}(v_0)\setminus \{\alpha,\beta\}$, let $\phi(x_i)\in L(x_i)\setminus\{\alpha\}$ for each $i\in \{1,2,\dots , k\}$ and let $\phi(y_i)\in L(y_i)\setminus\{\phi(x_i)\}$ for each $i\in\{1,2,\dots , \ell\}$.
This extends $\phi$ to a proper conflict-free $L$-coloring of $T$, a contradiction.

Thus, we infer that $\tilde{L}(v_0)\setminus \{\alpha,\beta\}=\emptyset$, which implies that $\alpha\in \tilde{L}(v_0)$ and $\beta=\gamma$.
Then we choose $\phi(v_0)\in L(v_0)\setminus\{\alpha,\beta\}$ arbitrarily, let $\phi(y_i)\in L(y_i)\setminus\{\phi(v_0)\}$ and let $\phi(x_i)\in L(x_i)\setminus\{\phi(v_0),\phi(y_i)\}$ for each $i\in\{1,2,\dots , \ell\}$, and 
let $\phi(x_i)\in L(x_i)\setminus\{\phi(v_0)\}$ for each $i\in \{\ell+1,\ell+2,\dots , k\}$.
Since $\alpha\notin L(X)$, we have $\alpha\in \mathcal{U}_{\phi}(v_0,T)$ and hence $\phi$ is a proper conflict-free $L$-coloring of $T$, a contradiction.

\medskip
\noindent
\textbf{Case 5.} $\bigl|\tilde{L}(v_0)\bigr|\leq 1$.

\medskip
\noindent
The assumption of this case implies that $|L(X)|\geq |L(v_0)|-1\geq k+1$.
For each color $c\in L(X)$, let $J_c=\{i\mid 1\leq i\leq k, c\in L(x_i)\}$.
Note that $J_c\neq \emptyset$ for any color $c\in L(X)$.
Let $\gamma$ be a color in $L(X)$ such that $|J_\gamma|$ is the smallest among all colors in $L(X)$.
Since $|L(X)|\geq k+1$ and $\sum_{c\in L(X)}|J_c|=\sum_{i=1}^k |L(x_i)|=2k+\ell$, 
we have $|J_\gamma|\leq \left\lfloor\frac{2k+\ell}{k+1}\right\rfloor\leq 2$.
In particular, if $\ell\leq 1$, then $|J_\gamma|=1$.
Let $L'$ be a list assignment of $T'$ defined by $L'(x_0)=L(x_0)\setminus \{\gamma\}$ and $L'(v)=L(v)$ for every $v\in V(T')\setminus \{x_0\}$.
By the minimality of $T$, $T'$ admits a proper conflict-free $L'$-coloring $\phi$.
Let $\alpha=\phi(x_0)$ and let $\beta$ be a color in $\mathcal{U}_{\phi}(x_0,T')$.

\medskip
\noindent
\textbf{Subcase 5.1.} $\ell\leq 1$.

\medskip
\noindent
Let $J_\gamma=\{p\}$.
We set $\phi(x_p)=\gamma$ and let $\phi(y_p)\in L(y_p)\setminus \{\phi(x_p)\}$ if $y_p$ exists.
Since $|L(v_0)|=k+2\geq 5$, we choose $\phi(v_0)\in L(v_0)\setminus\{\alpha,\beta,\gamma,\phi(y_{p})\}$.
For $i\in \{1,2,\dots , \ell\}\setminus\{p\}$, we let $\phi(y_i)\in L(y_i)\setminus \{\phi(v_0)\}$ and let $\phi(x_i)\in L(x_i)\setminus \{\phi(v_0),\phi(y_i)\}$.
For $i\in \{\ell+1, \ell+2,\dots , k\}\setminus \{p\}$, let $\phi(x_i)\in L(x_i)\setminus \{\phi(v_0)\}$.
Since $\gamma\in \mathcal{U}_{\phi}(v_0,T)$, $\phi$ is a proper conflict-free $L$-coloring of $T$, a contradiction.

\medskip
\noindent
\textbf{Subcase 5.2.} $\ell\geq 2$.

\medskip
\noindent
The assumption of the subcase, together with the assumption $\ell\leq k-2$, implies that $k\geq \ell+2\geq 4$.
If there is a color $c\in L(X)$ with $|J_c|\leq 1$, then we argue in a similar way as in the previous Subcase 5.1.
We may now assume that $|J_c|\geq 2$ for every color $c\in L(X)$, and in particular we know that $|J_\gamma|=2$.
Let $J_\gamma=\{p,q\}$ for some $1\leq p<q\leq k$.

Suppose first that $k\geq 5$.
We let $\phi(x_p)=\gamma$, $\phi(x_q)\in L(x_q)\setminus\{\gamma\}$, and for each $i\in\{p,q\}$, let $\phi(y_i)\in L(y_i)\setminus \{\phi(x_i)\}$ if $y_i$ exists.
Since $|L(v_0)|=k+2\geq 7$, we choose $\phi(v_0)\in L(v_0)$ distinct from $\alpha, \beta, \gamma, \phi(x_q)$, and also distinct from $\phi(y_p)$ and $\phi(y_q)$ in case they are defined.
For $i\in \{1,2,\dots , \ell\}\setminus\{p,q\}$, we let $\phi(y_i)\in L(y_i)\setminus \{\phi(v_0)\}$ and let $\phi(x_i)\in L(x_i)\setminus \{\phi(v_0),\phi(y_i)\}$.
For $i\in \{\ell+1, \ell+2,\dots , k\}\setminus \{p,q\}$, let $\phi(x_i)\in L(x_i)\setminus \{\phi(v_0)\}$.
Since $\gamma\in \mathcal{U}_{\phi}(v_0,T)$, $\phi$ is a proper conflict-free $L$-coloring of $T$, a contradiction.

Now we may assume that $k=4$.
Since $\ell\leq k-2=2$ and $2(k+1)\leq \sum_{c\in L(X)}|J_c|=\sum_{i=1}^k|L(x_i)|=2k+\ell$, we infer that $\ell=2$ and $|J_c|=2$ for every $c\in L(X)$.
Thus, without loss of generality, we may assume that $q=4$, which implies that $d_T(x_q)=1$.
Then we let $\phi(x_q)=\gamma$, $\phi(x_p)\in L(x_p)\setminus\{\gamma\}$, and let $\phi(y_p)\in L(y_p)\setminus \{\phi(x_p)\}$ if $y_p$ exists.
Since $|L(v_0)|=k+2=6$, 
we choose $\phi(v_0)\in L(v_0)$ distinct from $\alpha, \beta, \gamma, \phi(x_p)$, and also distinct from $\phi(y_p)$ in case it is defined.
For $i\in \{1,2\}\setminus\{p\}$, we let $\phi(y_i)\in L(y_i)\setminus \{\phi(v_0)\}$ and let $\phi(x_i)\in L(x_i)\setminus \{\phi(v_0),\phi(y_i)\}$.
For $i\in \{3,4\}\setminus \{p,q\}$, let $\phi(x_i)\in L(x_i)\setminus \{\phi(v_0)\}$.
Since $\gamma\in \mathcal{U}_{\phi}(v_0,T)$, $\phi$ is a proper conflict-free $L$-coloring of $T$, a contradiction.

\medskip
This completes the proof of Theorem~\ref{thm:tree}.

\section*{Acknowledgement}

M. Kashima has been supported by JSPS KAKENHI Grant No.~25KJ2077.
R. \v{S}krekovski has been partially supported by the Slovenian Research Agency and Innovation (ARIS) program P1-0383, project J1-3002, and the annual work program of Rudolfovo. 
R. Xu has been supported by National Science Foundation for Young Scientists of China under Grant No. 12401472, and Zhejiang Provincial Natural Science Foundation of China under Grant No. LQN25A010011.


\begin{thebibliography}{99}
    \bibitem{B1977}
	O.~V.~Borodin, Criterion of chromaticity of a degree prescription (in Russian), \textit{Abstracts of IV All-Union Conf. on Th. Cybernetics}, 1977, 127--128.
	\bibitem{CPS2023}
	Y.~Caro, M.~Petru\v{s}evski, and R. \v{S}krekovski, Remarks on proper conflict-free colorings of graphs,
	\textit{Discrete Math.} \textbf{346} (2023), 113221. 
	\bibitem{CCKP2025}
	E.-K. Cho, I. Choi, H. Kwon, and B. Park, Brooks-type theorems for relaxations of square colorings, 
    \textit{Discrete Math.} \textbf{348} (2025), 114233.
	\bibitem{CLarxiv}
	D.~W.~Cranston, C.~H.~Liu, Proper conflict-free coloring of graphs with large maximum degree, \textit{SIAM J. Discrete Math.} \textbf{38} (4) (2024), 3004--3027.
    \bibitem{ERT1979}
	P.~Erd\H{o}s, A.~L.~Rubin, and H.~Taylor, Choosability in graphs,
	\textit{Proc. West Coast Conf. on Combinatorics, Graph Theory and Computing} (Humboldt State Univ., Arcata, Calif., 1979). 
	\bibitem{FLRS2023}
	I.~Fabrici, B.~Lu\v{z}ar, S.~Rindo\v{s}ov\'{a}, and R.~Sot\'{a}k, Proper conflict-free and unique-maximum colorings of planar graphs with respect to neighborhoods,
	\textit{Discrete Appl. Math.} \textbf{324} (2023), 80--92.
	\bibitem{KP2024}
	M.~Kamyczura and J.~Przybylo, On conflict-free proper colourings of graphs without small degree vertices,
	\textit{Discrete Math.} \textbf{347} (2024), 113712.
    \bibitem{KSX2025}
    M.~Kashima, R.~\v{S}krekovski, and R.~Xu, Proper conflict-free degree-choosability of outerplanar graphs, \textit{Discrete Math.} \textbf{349} (2026), 114800.
    \bibitem{KSXdegree}
    M.~Kashima, R.~\v{S}krekovski, and R.~Xu, Results on proper conflict-free list coloring of graphs, \href{https://arxiv.org/abs/2508.20521}{arXiv:2508.20521}.
    \bibitem{KSX2025+}
    M.~Kashima, R.~\v{S}krekovski, and R.~Xu, Degree-choosability of proper conflict-free list coloring of planar graphs, manuscript.
	\bibitem{L2024}
	C.-H.~Liu, Proper conflict-free list-coloring, odd minors subdivisions, and layered treewidth,
	\textit{Discrete Math.} \textbf{347} (2024), 113668.
    \bibitem{LRarxiv}
    C.-H.~Liu and B.~Reed, Asymptotically optimal proper conflict-free coloring, \textit{Random Structure Algorithm} \textbf{66} (3) (2025), e21285.
    \bibitem{LY2013}
	C.-H.~Liu and G.~Yu, Linear colorings of subcubic graphs,
	\textit{European J. Combin.} \textbf{34} (2013), 1040--1050.
\end{thebibliography}
\end{document}